\numberwithin{equation}{section}
\DeclareMathOperator{\E}{\mathbb{E}}
\DeclareMathOperator*{\Span}{span}
\DeclareMathOperator{\HS}{HS}
\def \C {\mathbb{C}}
\def \N {\mathbb{N}}
\def \P {\mathbb{P}}
\def \R {\mathbb{R}}
\def \Z {\mathbb{Z}}
\def \EE {\mathcal{E}}
\def \e {\varepsilon}
\def \d {\delta}
\newcommand{\etc}{,\ldots,}
\newcommand{\pr}[2]{\left \langle #1, #2 \right \rangle}
\newcommand{\norm}[1]{\left \| #1 \right \|}
\newtheorem{theorem}{Theorem}[section]
\newtheorem{corollary}[theorem]{Corollary}
\newtheorem{lemma}[theorem]{Lemma}
\theoremstyle{remark}
\newtheorem{remark}[theorem]{Remark}
\title[Approximately Hadamard matrices and Riesz bases in  frames]{Approximately Hadamard matrices \\ and  Riesz bases in random frames}
\author{Xiaoyu Dong}
\author{Mark Rudelson}
\address{Department of Mathematics, University of Michigan, 530 Church St., Ann Arbor, MI 48109, U.S.A.}
\email{\{xydong, rudelson\}@umich.edu}
\thanks{Research supported in part by NSF grant  DMS 2054408 and by a fellowship from the Simons Foundation.}  
\date{\today}
\subjclass[2000]{60B20}
\begin{document}
	
	\begin{abstract}
		An $n \times n$ matrix with $\pm 1$ entries which acts on $\R^n$ as a scaled isometry is called Hadamard. Such matrices exist in some, but not all dimensions. Combining number-theoretic and probabilistic tools we construct matrices with $\pm 1$ entries which act as approximate scaled isometries in $\R^n$ for all $n \in \N$. More precisely, the matrices we construct have condition numbers bounded by a constant independent of $n$.
		
		Using this construction, we establish a phase transition for the probability that a random frame contains a Riesz basis.
		Namely, we show that a random frame in $\R^n$ formed by $N$ vectors with  independent identically distributed coordinate having a non-degenerate symmetric distribution contains many Riesz bases with high probability provided that $N \ge \exp(Cn)$. On the other hand, we prove that if the entries are subgaussian, then a random frame fails to contain a Riesz basis with probability close to $1$ whenever $N \le \exp(cn)$, where $c<C$ are constants depending on the distribution of the entries.
	\end{abstract}
	
	\maketitle

\section{Introduction and main results}  \label{sec: intro}
Let $n<N$ be natural numbers. 
A set of vectors $X_1 \etc X_N \in \R^n$ is called a \emph{frame} if 
\begin{equation} \label{eq: frame}
	K(n,N) \norm{x}_2^2 
	\le \sum_{j=1}^N \pr{x}{X_j}^2 
	\le R K(n,N) \norm{x}_2^2 
\end{equation}
for all $x \in \R^n$. Here $R \ge 1$ is an absolute constant called the frame constant, and $K(n,N)>0$ is some function of $n$ and $N$. 
The notation $\norm{x}_2$ stands for the Euclidean norm of the vector $x=(x_1 \etc x_n)$:
\[
  \norm{x}_2= \left( \sum_{j=1}^n x_j^2 \right)^{1/2}.
\]

In the last 40 years, frame theory became a well-developed area of applied mathematics, see \cite{CKP}, \cite{Cr 1}, \cite{Cr 2}, and the references therein.
A frame can intuitively be regarded as overcomplete basis in $\R^n$. 
Because of this property, frames became a valuable tool in signal transmission.
A signal which is viewed as an $n$-dimensional vector can be encoded by the sequence of its inner products with the frame vectors. If this sequence is transmitted over a communication line, then the original signal can be reconstructed even if part of the coefficients is lost or corrupted in the process of transmission. 
Moreover, this encoding is robust, which means that if the inner products are evaluated with some noise, then the reconstructed version will be close to the original one with the error depending on the noise magnitude.

One of the most popular classes of frames in algorithmic applications is the set of random frames.
Such frames became also the method of choice in compressed sensing where one needs to reconstruct a low complexity signal from a small number of linear measurements, see, e.g., \cite{Ver}.
For example, if complexity is measured as the size of the support, and the support itself is unknown, the random frames provide robust recovery with optimal or almost optimal theoretical guarantees.

To construct a random frame, consider a random vector $X \in \R^n$ with centered uncorrelated coordinates of unit variance.  
In other words, assume that $\E X=0$ and $\E X X^\top = I_n$.
Let vectors $X_1 \etc X_N$ be independent copies of $X$.
The Law of Large Numbers implies that 
\[
  \lim_{N \to \infty}  \frac{1}{N} \sum_{j=1}^N X_j X_j^\top =I_n  \quad \text{a.s.}
\]
and thus, with probability close to $1$,
\[
	(1-\e) \norm{x}_2^2 
\le \sum_{j=1}^N \pr{x}{X_j}^2 
\le (1+\e)  \norm{x}_2^2   
\]
for all $x \in \R^n$ provided that $N=N(\e)$ is sufficiently large.

Another,  trivial way to construct a frame is to take several bases in $\R^n$ and concatenate them. 
This allows an exact reconstruction of the transmitted signal if the number of corrupted coordinates is relatively small.
Indeed, one can reconstruct the original vector from the set of transmitted coordinates for each basis separately and keep the copy which is repeated many times. 
While in practice the random frames perform better than such concatenated bases, it leads to a question whether a random frame contains a copy or copies of a nice basis. 
More precisely, a sequence of $n$ vectors $v_1 \etc v_n \in \R^n$ is called a Riesz basis if it possesses the frame property \eqref{eq: frame}.
This property ensures that the reconstruction is robust, i.e., that the reconstructed vector is close to the original one if the coordinates are distorted by adding a  small noise.
These considerations lead to a natural question of determining the values of $N$ for which a random frame $\{X_1 \etc X_N\} \subset \R^n$ contains one or many Riesz bases with high probability.

This problem can be conveniently translated to the language of random matrices. 
Namely, for an $n \times N$ matrix $A$, define its singular values as 
\[
s_{\max} (A) =s_1(A) \ge s_2(A) \ge \cdots \ge s_n(W) = s_{\min}(A) \ge 0,
\]
where $s_j(A)=\sqrt{\lambda_j(A A^\top)}$, and $\lambda_1(A A^\top) \etc \lambda_n(A A^\top)$ are eigenvalues of $A A^\top$ arranged in the decreasing order.
Also, define the condition number of $A$ as
\[
\kappa(A) = \frac{s_{\max}(A)}{s_{\min}(A)}
\]
using the convention that $\kappa(A)=\infty$ whenever $s_{\min}(A)=0$.
With this notation, the frame property \eqref{eq: frame} can be rewritten as $\kappa(A_{n,N}) \le C$ where $A_{n,N}$ is the $n \times N$ matrix with columns $X_1 \etc X_N$.
Thus, the problem of existence of a Riesz basis in a random frame can be recast as the question of existence of one or many well-conditioned square $n \times n$ submatrices of an $n \times N$ random matrix $A_{n,N}$ with i.i.d. entries.
Our first main result shows that the probability of finding such a submatrix undergoes a phase transition when $N$ is exponential in terms of $n$. 
Since the upper and the lower bound hold under somewhat different assumptions, we formulate them separately.

Denote by $[N]$ the set $\{1 \etc N\}$. 
Let $A$ be an $n \times N$ matrix.
If $I \subset [N]$, denote by $A_I$ the submatix of $A$ whose columns belong to $I$.
The following theorem shows that if $N$ is exponential in $n$, then with high probability, the $n \times N$ random matrix has many  square submatrices with  uniformly bounded condition numbers. In the language of frames, it means that a random frame with exponentially many vectors contains a large number of Riesz bases whose frame constants are uniformly bounded.
\begin{theorem}  \label{thm: exist}
	Let $A$ be an $n \times N$ matrix with i.i.d. symmetric non-degenerate entries. 
	Then there exist constants $c, C, \alpha, \beta>0$ depending on the distribution of entries of $A$ with the following property. \\	
	Assume that 
	\[
	 N \ge \exp(Cn).
	\] 
	Then there exists 
	\[
	  L \ge \exp(c n)
	\]
	such that
	\begin{align*}
	 & \P \big( \text{\rm exist disjoint subsets }
	  I_1 \etc I_L \text{\rm \  of } [N] \text{\rm \ with } \big. \\
	  &\qquad \big.  |I_j|=n \text{\rm \  and } \kappa(A_{I_j}) < \alpha \text{\rm \ for all } j \in [L] \big) \\
	  & \ge 1- \exp \left(  - \exp ( \beta n ) \right).
	\end{align*}
\end{theorem}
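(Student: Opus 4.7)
The plan is to leverage the paper's construction of approximately Hadamard $\pm 1$ matrices: fix such an $H = H_n \in \{\pm 1\}^{n \times n}$ with $\kappa(H_n)\le C_0$, and use it as a combinatorial ``template'' that we try to find many copies of among the columns of $A$. By symmetry of the entries, for each column the sign vector $\epsilon_j = \sign(X_j)$ and the magnitude vector $|X_j|$ are independent, with $\epsilon_j$ uniform on $\{\pm 1\}^n$. Using non-degeneracy, fix $0 < a < b$ with $p := \P(|X_{11}| \in [a,b]) > 0$ and with $b/a$ as close to $1$ as the perturbation argument below requires.

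Call $X_j$ \emph{good} if $|X_{ij}| \in [a,b]$ for every $i$, so $\P(\text{good}) = p^n$. For each column $h_k$ of $H_n$, let $B_k$ be the set of indices $j$ such that $X_j$ is good and $\epsilon_j \in \{\pm h_k\}$. Since $H_n$ is invertible, no two $h_k$ are equal up to sign, so the buckets $B_k$ are disjoint, and each has expected size $2Np^n/2^n$. Taking $C > \ln 2 - \ln p$ with some slack, Chernoff plus a union bound over $k\in [n]$ shows $|B_k| \ge L_0 := \exp(c_0 n)$ for every $k$, outside an event of probability $\exp(-\exp(c_0' n))$. On this event, greedily pick disjoint subsets $I_1,\dots,I_{L_0}$, each taking one column from each bucket.

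For any such $I$, after multiplying column $j^{(k)}$ by $\pm 1$ (which preserves $\kappa$) so that every sign vector equals exactly $h_k$, we get $A_I = H_n \odot M$, where $M_{ik} = |X_{i,j^{(k)}}|$ is conditionally an $n\times n$ matrix of i.i.d.\ $[a,b]$-valued entries. Letting $\mu := \E[|X_{11}|\mid |X_{11}|\in[a,b]]$, we write
\[
A_I \;=\; \mu H_n \;+\; Y,\qquad Y_{ik}=(H_n)_{ik}\bigl(M_{ik}-\mu\bigr),
\]
giving a deterministic well-conditioned skeleton $\mu H_n$ (with $s_{\min}\ge\mu\sqrt{n}/C_0$ and $s_{\max}\le \mu C_0\sqrt{n}$, from $\kappa(H_n)\le C_0$ and $\|H_n\|_F^2=n^2$) plus a random matrix $Y$ with independent mean-zero entries bounded by $b-a$. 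A standard $\varepsilon$-net/Hoeffding argument gives $\|Y\| \le K(b-a)\sqrt{n}$ with probability $1-\exp(-c_1 n)$; choosing $b-a$ small enough relative to $a$ (which only affects the distribution-dependent constant $p$), Weyl's inequality forces $\kappa(A_I) \le \alpha := 4C_0^2$.

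Finally, the column-disjoint submatrices $A_{I_r}$, once we condition on the bucketing (which depends on $A$ only through the signs $\epsilon_j$ and the indicator events $\{X_j \text{ good}\}$), depend on mutually disjoint sets of independent entries, so the perturbations $Y_r$ are mutually independent. A second Chernoff bound over $L_0 = \exp(c_0 n)$ independent trials then upgrades the per-submatrix estimate $1-\exp(-c_1 n)$ into the claim that at least $L := \exp(cn)$ of the $A_{I_r}$ satisfy $\kappa(A_{I_r})\le \alpha$, with total failure probability $\exp(-\exp(\beta n))$. The main technical obstacle is the middle, perturbation step: $\|Y\|$ is of the same $\sqrt{n}$ order of magnitude as $\|\mu H_n\|$, so smallness of the perturbation has to be purchased by choosing $b/a$ close to $1$, which shrinks $p$ and thereby enlarges the required $C$.
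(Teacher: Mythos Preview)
Your proposal is correct and follows essentially the same strategy as the paper: use the approximately Hadamard matrix $V$ as a template, find exponentially many columns of $A$ matching each column of $V$ in sign with magnitudes confined to a short interval, write each resulting $n\times n$ submatrix as (a scalar multiple of) $V$ plus a small i.i.d.\ mean-zero perturbation, control the perturbation by a standard subgaussian norm bound, and finish with Bernstein/Chernoff over the independent submatrices. The only cosmetic differences are that the paper pre-partitions $[N]$ into $n$ blocks (one per column of $V$) rather than bucketing by sign pattern, and phrases the matching condition as $\|A_k - aV_j\|_\infty \le \nu$ rather than separating sign and magnitude; these are equivalent formulations of the same argument.
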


The strategy of proving Theorem \ref{thm: exist} relies on using  
 a certain deterministic $n \times n$ matrix $V$ having a bounded condition number. 
 Denote by $\text{Col}_j(M)$ the $j$-th column of the matrix $M$.
We partition the set of integers $[N]$ into $n$ subsets $I_1 \etc I_n$ of approximately the same size and show that with high probability, the set $\{\text{Col}_i(A) \}_{i \in I_j}$ contains many columns close to $\text{Col}_j(V)$.
Condition on the event that such columns exist and form an $n \times n$ matrix $B$ taking one column from each set.
Then conditionally this matrix can be viewed as a noisy version of the matrix $V$. 
This allows to show that with high probability, the matrix $B$ has a bounded condition number as well. 

The key to this strategy is a successful choice of the pattern matrix $V$.
Since we strive to prove Theorem \ref{thm: exist} under minimal assumptions, the choice of $V$ becomes a non-trivial task. 
Indeed, the requirement that a column of $A$ can be close to a column of $V$ with a non-negligible probability forces us to look for matrices $V$ whose entries are in the support of the distribution of an entry of $A$. 
The latter can be as small as two points, $a$ and $-a$ because of the symmetry assumption.
Thus, we need to to construct $V$ as a scaled copy of a matrix with $\pm 1$ entries.
Such matrices are known in some cases. 
For instance, the condition number of any Hadamard matrix is one.
An $n \times n$ matrix $H$ is called Hadamard if $n^{-1/2} H$ is an isometry.
The earliest result on the existence of Hadamard matrices was probably proved by Sylvester \cite{S-TI} who showed that Hadamard matrices exist for dimension $2^k$ where $k$ is any nonnegative integer (these matrices are now called Wash since their rows are Walsh functions).
Hadamard matrices is a well-studied subject, and a number of constructions of such matrices are available, see e.g., the books \cite{Ag}, \cite{Hor}, and the references therein.
In particular, Wallis \cite{JSW} proved that if $p>3$ is an integer, then there exists an Hadamard matrix of order $2^tp$, where $t = \left\lfloor {2{{\log }_2}(p - 3)} \right\rfloor $. Craigen \cite{C-SG} improved Wallis's result by showing that for any odd number $p$, there exists an Hadamard matrix of order $2^tp$, where $t = 4\left\lceil {\frac{1}{6}{{\log }_2}((p - 1)/2)} \right\rceil +2$. Recently, de Launey \cite{dL-AE} studied the asymptotic existence of Hadamard matrices and concluded that for any $\epsilon>0$, the set of odd numbers $k$ for which there is a Hadamard matrix of order $k2^{2+[ {\epsilon{{\log }_2}(k)} ]}$ always has positive density in the set of natural numbers.
Yet, the dimensions in which Hadamard matrices were constructed are rare.

This leads us to a task of constructing \emph{approximately Hadamard matrices}, i.e., matrices with $\pm 1$ entries and bounded condition numbers.
Some constructions of matrices with properties simlar to Hadamard's are available. For example, Banica, Nechita, and \.{Z}yczkowski \cite{BNZ-AH} defined an almost Hadamard matrix to be a $N$ dimensional real square matrix $H$, such that $H/\sqrt{N}$ is orthogonal, and is a local maximum of the $\ell_1$-norm of the entries on the orthogonal group $O(N)$. They showed the existence of almost Hadamard matrices under some special assumptions. There is also a notion of quasi-Hadamard matrix  \cite{BNZ-AH,KS-QH}, which is defined as a square matrix with $\{-1,1\}$ entries that maximizes the absolute value of the determinant, but there are only very limited results on the existence of those matrices. In summary, no existing construction is directly related to our purpose.

The second main result of the paper is the following theorem asserting the existence of an approximately Hadamard matrix in all dimensions.
\begin{theorem} \label{th: approx-Hadamard}
	There exist constants $0<c<C$  such that for any  $n \in \N$, one can find an $n \times n$ matrix $V$ with $\pm 1$ entries satisfying
	\[
	c \sqrt{n}  \le s_{\min}(V) \le  s_{\max}(V) \le C \sqrt{n}.
	\]	
\end{theorem}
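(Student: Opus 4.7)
My plan combines three ingredients: a Sylvester-type doubling that reduces the problem to a single dyadic window; a number-theoretic (Paley) construction that produces a well-conditioned core of nearly the right size; and a probabilistic completion that brings the dimension exactly to $n$.

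First, a ``Sylvester doubling'' identity: for any $m \times m$ matrix $M$ with $\pm 1$ entries, the $2m \times 2m$ matrix
\[
M' \;=\; \begin{pmatrix} M & M \\ M & -M \end{pmatrix}
\]
has $\pm 1$ entries and a direct computation gives, after a block permutation, $(M')^{\top} M' = (2 M^{\top} M) \oplus (2 M^{\top} M)$; hence $s_j(M') = \sqrt{2}\, s_j(M)$. So doubling preserves the approximate Hadamard property with the same constants, and it suffices to construct $V$ for $n$ in a single dyadic window $[N_0, 2N_0)$, with the finitely many $n < N_0$ handled by hand.

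For $n$ in that window I would, using the prime number theorem for arithmetic progressions applied in a short interval, pick a prime $p \equiv 3 \pmod 4$ with $p + 1 \le n$ and $r := n - (p+1) \le n/C_0$ for a large absolute constant $C_0$. The Paley construction then yields a genuine Hadamard matrix $H_{p+1}$ of size $p+1$ with $\pm 1$ entries. Complete it to size $n$ by setting
\[
V \;=\; \begin{pmatrix} H_{p+1} & B \\ C & D \end{pmatrix},
\]
where $B, C, D$ have independent uniform $\pm 1$ entries of the appropriate sizes, and show that $V$ is approximately Hadamard with positive probability.

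The main obstacle is precisely this last probabilistic step. A purely random $\pm 1$ matrix is itself ill-conditioned (smallest singular value of order $n^{-1/2}$), so one cannot treat the whole matrix as random; the deterministic Hadamard block must carry the lower bound. For each fixed unit vector $u = (u_1, u_2) \in \R^{p+1} \oplus \R^r$ I would write $\|V u\|^2$ as an affine-quadratic form in the i.i.d.\ entries of $B, C, D$, verify by direct expansion that $\E\|V u\|^2 = (p+1)\|u_1\|^2 + (p+1)\|u_2\|^2 + r\|u_1\|^2 + r\|u_2\|^2 = n$, and apply a Hanson-Wright-type inequality to get concentration $\P(|\|V u\|^2 - n| \ge n/2) \le \exp(-cn)$. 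An $\varepsilon$-net over the unit sphere of $\R^n$ (net size $(3/\varepsilon)^n$ against the deviation probability $\exp(-cn)$) then upgrades this to a uniform lower bound. The condition $r \le n/C_0$ is critical: it ensures that the rectangular random block $B$ of size $(p+1) \times r$ has $s_{\min}(B) \gtrsim \sqrt{p+1} \asymp \sqrt n$ with high probability, so that $\|V u\|^2 \gtrsim n$ also in those directions where $u$ is concentrated on the last $r$ coordinates and the Hadamard block is dormant. The upper bound $s_{\max}(V) \le C\sqrt n$ is routine from standard operator-norm estimates on random $\pm 1$ matrices.
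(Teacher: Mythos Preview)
There is a genuine gap in the probabilistic completion step. The Hanson--Wright plus $\varepsilon$-net argument over the full sphere $S^{n-1}$ cannot work: the constant $c$ in $\P(|\|Vu\|^2 - n|\ge n/2)\le\exp(-cn)$ is a fixed absolute constant, not at your disposal, and nothing forces $c>\log(3/\varepsilon)$. Indeed, the identical argument applied with $p+1=0$ would ``prove'' that a fully random $\pm1$ matrix has $s_{\min}\gtrsim\sqrt n$, which you yourself note is false. The concrete obstruction is this: the top block $[H\mid B]$ is $(p{+}1)\times n$ with $n>p{+}1$, hence has an $r$-dimensional kernel. For $u$ in that kernel the Hadamard block is \emph{completely} dormant, $Hu_1+Bu_2=0$, so $\|Vu\|=\|Cu_1+Du_2\|$ lives in $\R^r$. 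Parameterising by $u_2$ via $u_1=-(p{+}1)^{-1}H^\top Bu_2$ gives $\|Vu\|/\|u\|\approx \tfrac{1}{\sqrt2}\,\|\tilde D u_2\|/\|u_2\|$ with $\tilde D:=D-(p{+}1)^{-1}CH^\top B$, an $r\times r$ matrix which, conditionally on $B,C$, is a fixed shift of the i.i.d.\ $\pm1$ matrix $D$. Thus $s_{\min}(V)\lesssim s_{\min}(\tilde D)$, and the latter is of order $r^{-1/2}$, not $\sqrt n$. In the extreme case $r=1$ (allowed whenever $n-2$ is a prime ${\equiv}\,3\pmod4$), $\tilde D$ is a scalar of size $O(1)$ with high probability, so $s_{\min}(V)=O(1)$. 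Neither your $s_{\min}(B)$ observation nor the Hadamard block helps on this kernel.

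The paper avoids this trap by ensuring the completion rows carry \emph{deterministic} well-conditioned structure. It writes $n=q_1+q_2+q_3+q_4$ with primes $q_j\approx n/4$ (Vinogradov's theorem), builds approximately Hadamard circulant blocks $U_{q_j}$ from the Legendre symbol, and places their top $q=\min_j q_j$ rows in a $4\times4$ Walsh pattern to form a $4q\times n$ block $W^{top}$ that is a near-isometry. The remaining $n-4q\le\varepsilon n$ rows contain the bottom rows $U_j^{bottom}$ on a block diagonal (the matrix $S$), with random $\pm1$ fill only off that diagonal. The point is that $SS^\top\approx\tfrac{n}{4}I$ deterministically, and this supplies the lower bound on $\|W^\top x\|$ precisely for those $x$ supported on the last few coordinates---exactly the regime where a purely random $[C\mid D]$ block fails.
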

The proof of Theorem \ref{th: approx-Hadamard} relies on Vinogradov's theorem from analytic number theory and combines number-theoretic and probabilistic ideas. 
The details are presented in Section \ref{sec: approx}. 
After Theorem \ref{th: approx-Hadamard} is proved, we prove Theorem \ref{thm: exist} in Section \ref{seq: submatrix}.

The conclusion of Theorem \ref{thm: exist} holds under minimal assumptions on the distribution of entries.
If we assume that the entries of the matrix are sub-gaussian, then the bound of Theorem \ref{thm: exist} becomes sharp.
Recall that a random variable $X$ is called subgaussian if there is $a>0$ such that 
\[
\E \exp \left( \frac{X^2}{a^2}\right) \le 2.
\]
If $X$ is subgaussian then the smallest number $a$ having this property is called the subgaussian norm of $X$ and denoted $\norm{X}_{\psi_2}$.
Subgaussian random variables form a large family containing many naturally arising ones, see, e.g. \cite{Ver book}.

The next  theorem shows that finding a submatrix with a bounded condition number requires an exponential number of columns for matrices with subgaussian entries.
\begin{theorem} \label{thm: not exist}
	Let $X$ be a centered subgaussian random variable.
	Then there exist $C,c, \tilde{c}, t_0>0$ depending only on $\frac{\norm{X}_{\psi_2}}{\norm{X}_2}$ with the following property.
	Let $t>t_0$, and assume that
	\[
	N \le \exp \left(  \frac{\tilde{c}}{t^4} n \right).
	\]
	Let $A$ be an $n \times N$ matrix whose  entries  are independent copies of  $X$. 
	Then 
	\[
	\P \left( \exists I \subset [N] \ |I|=n \text{ and } \kappa(A_I) < t \right) \le \exp \left(  - c \frac{n^2}{t^4}\right).
	\]
\end{theorem}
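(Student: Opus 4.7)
The plan is to derive the theorem from a single-submatrix estimate via a union bound. Specifically, I aim to prove that for any $n \times n$ matrix $B$ with i.i.d.\ entries distributed as $X$,
\[
\P(\kappa(B) < t) \le \exp(-C n^2 / t^4),
\]
for some constant $C > 0$ depending only on $\norm{X}_{\psi_2}/\norm{X}_2$. Combined with the union bound over the $\binom{N}{n} \le e^{n \log N}$ choices of $I \subset [N]$ with $|I|=n$, and the hypothesis $\log N \le \tilde c \, n/t^4$ with $\tilde c < C$, this yields the desired bound $\exp(-c n^2/t^4)$.

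The single-submatrix estimate I would prove in two stages. First, the Frobenius norm $\norm{B}_\HS^2 = \sum_{i,j} B_{ij}^2$ is a sum of $n^2$ i.i.d.\ subexponential random variables of mean $\norm{X}_2^2$; Bernstein's inequality gives $\norm{B}_\HS^2 \ge n^2 \norm{X}_2^2/2$ except on an event of probability at most $\exp(-c n^2)$. On the complementary event, $s_{\max}(B)^2 \ge \norm{B}_\HS^2/n \ge n \norm{X}_2^2/2$, so $\kappa(B) < t$ forces $s_{\min}(B) \ge c' \sqrt n/t$, and the task reduces to the upper-tail bound
\[
\P\bigl(s_{\min}(B) \ge c' \sqrt n/t\bigr) \le \exp(-C n^2/t^4).
\]

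Second, for this upper tail on $s_{\min}$, write $R_1, \ldots, R_n$ for the rows of $B$ and set $H_i = \Span\{R_j : j \ne i\}$. Using the standard bound $s_{\min}(B) \le \mathrm{dist}(R_i, H_i)$, the event forces $\mathrm{dist}(R_i, H_i) \ge c'\sqrt n / t$ for every $i$. Conditional on $\{R_j\}_{j \ne i}$, this distance equals $|\pr{R_i}{v_i}|$ for a unit normal $v_i$ to $H_i$, independent of $R_i$; since $\pr{R_i}{v_i}$ is centered subgaussian with variance $\norm{X}_2^2$, the subgaussian tail yields
\[
\P\bigl(\mathrm{dist}(R_i, H_i) \ge c'\sqrt n/t \,\big|\, \{R_j\}_{j \ne i}\bigr) \le 2\exp(-c_2 n/t^2).
\]
The $n$ per-row events are not independent, as each normal $v_i$ depends on the other rows, but they can be combined into a joint bound of order $\exp(-C n^2/t^4)$ via decoupling/iterated conditioning. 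The target exponent $n^2/t^4$ is substantially weaker than the $n^2/t^2$ that assumed independence would give, so a lossy combination suffices; the extra factor of $t^2$ in the denominator absorbs both the dependence structure and the weaker anti-concentration of subgaussian variables compared with Gaussians.

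The main obstacle is this final combining step: rigorously decoupling the $n$ events $\{\mathrm{dist}(R_i, H_i) \ge c'\sqrt n/t\}$ while properly tracking the subgaussian constant $\norm{X}_{\psi_2}/\norm{X}_2$. A natural implementation uses the identity $\norm{B^{-1}}_\HS^2 = \sum_i \mathrm{dist}(R_i, H_i)^{-2}$ together with a Hanson--Wright-type estimate for the quadratic form $B^\top B$, reducing the problem to a spectral concentration bound for $\norm{B^\top B - (\norm{B}_\HS^2/n) I}_{op}$ with the correct $t$-dependence.
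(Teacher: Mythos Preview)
Your overall architecture (single-matrix bound plus union bound, and reducing $\kappa(B)<t$ to an upper-tail estimate on $s_{\min}(B)$) matches the paper exactly. The gap is precisely where you flag it: the ``decoupling/iterated conditioning'' step for the $n$ distance events is not a routine technicality, and the remedies you sketch (the identity for $\norm{B^{-1}}_{\HS}$, or Hanson--Wright for $B^\top B$) do not obviously produce the exponent $n^2/t^4$. With one-dimensional projections $\mathrm{dist}(R_i,H_i)=|\langle R_i,v_i\rangle|$, you only ever get \emph{one} independent event at a time after conditioning, so you cannot multiply probabilities; iterating the conditioning does not help because each $v_i$ depends on all the other rows in a way that is not monotone or easily decoupled.

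The paper's device is to replace the one-dimensional projection by a $k$-dimensional one with $k=\lfloor \varepsilon n\rfloor$ and $\varepsilon\asymp t^{-2}$. Concretely, let $H=\Span(B e_{k+1},\dots,B e_n)$ and use the elementary fact $s_{\min}(B)\le \min_{j\le k}\norm{P_{H^\perp}Be_j}_2$. Conditioning on $Be_{k+1},\dots,Be_n$ fixes $H$, and then the $k$ events $\{\norm{P_{H^\perp}Be_j}_2\ge 2\sqrt{\varepsilon n}\}$ for $j\le k$ are \emph{genuinely independent}. Each has conditional probability at most $\exp(-c\varepsilon n)$ by Hanson--Wright (note $\norm{P_{H^\perp}}_{\HS}\le\sqrt{k}\le\sqrt{\varepsilon n}$), so the intersection has probability at most $\exp(-c\varepsilon n\cdot k)=\exp(-c\varepsilon^2 n^2)$. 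Taking $\varepsilon=1/(4t^2)$ gives the required $\exp(-c'n^2/t^4)$, and the $t^{-4}$ exponent now appears for a clean structural reason: one factor of $\varepsilon$ from the per-column tail and one from the number of independent columns. Your approach with $k=1$ recovers only one of these factors and has no mechanism for the second; the missing idea is exactly this choice of $k\asymp n/t^2$ that manufactures independence.
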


We prove  Theorem \ref{thm: not exist} in Section \ref{sec: no small}. Its proof is easier than that of Theorem \ref{thm: exist} and relies on the Hanson-Wright inequality \cite{RV-HW}.

\subsection*{Acknowledgment.} The second author is grateful to Marcin Bownik for helpful discussions and bringing his attention to the problem.  Part of this work was done when the second author visited the Weizmann Institute of Science.
He is grateful to the Institute for its hospitality.
The authors thank a referee for thoroughly checking the manuscript and correcting many typos.

\section{Approximately Hadamard matrices} \label{sec: approx}
 In this section we construct an $n \times n$ matrix with $\pm 1$ entries whose scaled copy acts on $\R^n$ as an approximate isometry.
 More precisely, for any sufficiently large $n$,  we construct an $n \times n$ matrix $V$ such that its condition number $\kappa(V)$
 is bounded by an absolute constant. 
 
 We use standard matrix norms below. Namely, $\norm{A}$ stands for the operator norm of an $n \times m$ matrix $A=(a_{i,j})$, and $\norm{A}_{\HS}$ stands for its Hilbert-Schmidt or Frobenius norm:
 \[
   \norm{A}= \max_{\norm{x}_2=1} \norm{Ax}_2, \quad \text{and} \quad 
   \norm{A}_{\HS}= \left( \sum_{i=1}^{n} \sum_{j=1}^{m} a_{i,j}^2 \right)^{1/2}.
 \]

 We will apply an above mentioned result of Wallis \cite{JSW} showing that Hadamard matrices exist in dimensions close to $n^3$.
 \begin{lemma} \label{lem: JSW}
 	There is $l_0 \in \N$ such that for any $l > l_0$, there exists an Hadamard matrix of dimension $m(l)$ with 
 	\[
 	m(l)=2^{2 \lceil \log_2(l-3) \rceil} l.
 	\]
 \end{lemma}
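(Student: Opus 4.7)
The plan is to deduce this statement directly from Wallis's theorem (recalled in Section~\ref{sec: intro}) combined with the classical Sylvester doubling construction. First, applying Wallis's theorem with $p = l$ yields, for every integer $l > 3$, a Hadamard matrix of dimension
\[
d(l) := 2^{\lfloor 2\log_2(l-3) \rfloor}\cdot l.
\]
The target dimension $m(l)$ in the statement differs from $d(l)$ only by a power-of-$2$ factor, because the elementary inequality $\lfloor 2x \rfloor \le 2\lceil x \rceil$ (valid for every real $x \ge 0$) guarantees that
\[
k(l) := 2\lceil \log_2(l-3) \rceil - \lfloor 2\log_2(l-3) \rfloor
\]
is a nonnegative integer and $m(l) = 2^{k(l)} d(l)$.

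To bridge the gap between $d(l)$ and $m(l)$, I would invoke Sylvester's observation that if $H$ is Hadamard of dimension $d$, then the Kronecker product $H \otimes H_2$, with $H_2 = \begin{pmatrix} 1 & 1 \\ 1 & -1 \end{pmatrix}$, is Hadamard of dimension $2d$: its entries are still $\pm 1$, and pairwise orthogonality of rows follows from the standard multiplicativity of the Kronecker product. Iterating this doubling $k(l)$ times, starting from the matrix furnished by Wallis's theorem, produces a Hadamard matrix of dimension $m(l)$. One may take $l_0 = 3$, so that $l-3 \ge 1$ and the logarithms are well-defined, or any larger threshold implicit in the precise form of Wallis's statement.

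No genuine obstacle arises. The lemma is essentially a reformulation of Wallis's theorem in which the floor has been replaced by twice a ceiling, and this small discrepancy is precisely what the Sylvester doubling absorbs. The only things that need to be checked are the elementary comparison $\lfloor 2x \rfloor \le 2\lceil x \rceil$ and the standard fact that Sylvester's tensor construction preserves the Hadamard property.
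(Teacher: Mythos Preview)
Your argument is correct and matches the paper's implicit reasoning. The paper does not prove this lemma separately; it simply attributes it to Wallis \cite{JSW}, whose theorem (as quoted in Section~\ref{sec: intro}) gives a Hadamard matrix of order $2^{\lfloor 2\log_2(l-3)\rfloor}l$. Your observation that the lemma's exponent $2\lceil\log_2(l-3)\rceil$ dominates Wallis's $\lfloor 2\log_2(l-3)\rfloor$, together with Sylvester doubling to absorb the extra power of~$2$, is exactly what is needed to pass from one formulation to the other; the paper itself invokes the same tensor-product fact a few lines later in the proof of Corollary~\ref{cor: JSW}.
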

We will need the following corollary.
 \begin{corollary} \label{cor: JSW}
 	For any $\e>0$, there exists $N(\e)$ such that for any $n>N_0(\e)$, one can find an even number $m \in [(1-\e)n, (1+\e)n  ]$ for which there exists an Hadamard matrix of size $m \times m$.
 \end{corollary}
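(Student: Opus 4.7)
The plan is to combine Lemma~\ref{lem: JSW} with the classical Sylvester doubling trick: if $H$ is an Hadamard matrix of order $m$, then $H \otimes H_2$, where $H_2$ is the $2 \times 2$ Hadamard matrix, is an Hadamard matrix of order $2m$ with $\pm 1$ entries. Iterating, the set of orders at which an Hadamard matrix is known to exist contains
\[
 S = \{\, 2^t l \;:\; l > l_0,\ t \ge 2\lceil \log_2(l-3) \rceil \,\}.
\]
It therefore suffices to locate an element of $S$ inside the window $[(1-\e)n,\ (1+\e)n]$, since any such element is automatically divisible by $2^t \ge 4$ and hence even.

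Given $\e > 0$ and $n$ large, I set $t := \lceil (2/3)\log_2 n \rceil + 2$ and look for an integer $l$ in the interval
\[
 J := \Bigl[\, (1-\e) n / 2^t,\ (1+\e) n / 2^t \,\Bigr].
\]
With this choice $2^t$ is comparable to $n^{2/3}$, so the length of $J$ is of order $\e n^{1/3}$, which exceeds $1$ once $n \ge N_0(\e)$. Hence $J$ contains an integer $l$, and by enlarging $N_0(\e)$ if necessary I may guarantee both $l > l_0$ and $l > 3$.

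It then remains to verify Wallis's constraint $t \ge 2\lceil \log_2(l-3) \rceil$ for this $l$. From $l \le (1+\e) n / 2^t \le 2 n / 2^t$ one has $\log_2(l-3) \le \log_2 n - t + 1$, whence $2\lceil \log_2(l-3) \rceil \le 2\log_2 n - 2t + 4$, so the desired inequality reduces to $3t \ge 2\log_2 n + 4$, which is ensured by the chosen $t$. Setting $m := 2^t l$ therefore yields an even integer $m \in [(1-\e)n,\ (1+\e)n]$ that is the order of a bona fide Hadamard matrix.

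The single real obstacle is the joint tuning of the parameter $t$: it must exceed $(2/3) \log_2 n$ by a fixed additive constant so that Wallis's lower bound holds throughout $J$, yet it cannot be chosen much larger, or else $|J|$ drops below unit length and $J$ might miss every integer. The value $t \approx (2/3) \log_2 n$ is the sweet spot, where the window length $\e n/2^t \sim \e n^{1/3}$ still diverges with $n$ while keeping $l \lesssim n^{1/3}$ small enough for Wallis's lemma to apply.
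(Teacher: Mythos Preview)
Your proof is correct and follows essentially the same approach as the paper: both use Wallis's lemma together with Sylvester doubling, choosing the dyadic exponent $t \approx \tfrac{2}{3}\log_2 n$ so that $l \approx n^{1/3}$ lands in the Wallis range while the window $[(1-\e)n,(1+\e)n]/2^t$ is long enough to contain an integer. Your packaging is slightly cleaner than the paper's, since by folding the doubling into the set $S$ from the outset you avoid the three-case split the paper performs according to the residue of $t$ modulo $2$.
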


\begin{proof}
	For any $n>12$, there exists a unique $k \in \N$ such that $2^{2k} (2^{k-1}+3) < n \le 2^{2(k+1)} (2^{k}+3)$.
	Assume first that  $2^{2k} (2^{k-1}+3) < n \le 2^{2k} (2^{k}+3)$. Set 
	\[
	m=2^{2k} \left \lceil \frac{n}{2^{2k}} \right \rceil.
	\]
	By Lemma \ref{lem: JSW}, there exists an Hadamard matrix of size $m \times m$. Since 
	\[
	1 \le \frac{m}{n} = \frac{\lceil 2^{-2k} n \rceil}{2^{-2k} n},
	\]
	and $2^{-2k}n \ge 2^{k-1}+3 > (n/2)^{1/3}$, the result follows if we choose $N(\e)$ sufficiently large.
	
	Since the tensor product of Hadamard matrices is an Hadamard matrix, and there are Hadamard matrices of sizes $2 \times 2$ and $4 \times 4$, there exist Hadamard matrices of sizes $2 m(l)$ and $4 m(l)$ for all $l > l_0$.
	This allows completing the proof of the corollary in the remaining cases when $2^{2k+1} (2^{k-1}+3) < n \le 2^{2k+1} (2^{k}+3)$ and $2^{2k+2} (2^{k-1}+3) < n \le 2^{2k+2} (2^{k}+3)$.
\end{proof}

 The aim of this section is to construct \emph{approximately Hadamard} matrices in any dimension, i.e. matrices whose condition number is $O(1)$.
 To this end, we use a construction of approximately Hadamard matrices of a prime size.

	Let $q \in \N$ be an odd prime number. 
	For $k \in \Z_q$, denote
	\[
	  e_q(k)= \exp \left( 2 \pi i \frac{k}{q} \right).
	\]
Define the Fourier transform on $\Z_q$ setting 
\[
  \hat{v}(j)=  \sum_{k \in \Z_q} v(k) e_q(jk)
\]
for a vector $v \in \C^{\Z_q}$ and $j \in \Z_q$.
\begin{lemma}  \label{lem: prime}
	Let $q$ be an odd prime number. 
	Then there exists a vector $u_q \in \{-1,1\}^{\Z_q}$ such that 
	\[
	  \big| |\hat{u}_q(j)|- \sqrt{q} \big| \le\sqrt{q} \d_q \quad \text{for any } j \in \Z_q
	\]
		with $\d_q=C q^{-1/4} \sqrt{\log q}$.
\end{lemma}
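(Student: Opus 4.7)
The plan is to realize $u_q$ as a $\pm 1$ modification of the quadratic character $\chi_q : \Z_q \to \{-1, 0, +1\}$. By Gauss's classical evaluation of quadratic Gauss sums, $\hat\chi_q(j) = \overline{\chi_q(j)}\, g$ for every $j \ne 0$, where the Gauss sum $g := \sum_{k} \chi_q(k) e_q(k)$ satisfies $|g| = \sqrt{q}$; also $\hat\chi_q(0) = 0$. Thus $|\hat\chi_q(j)| = \sqrt{q}$ at every non-zero frequency, but $\chi_q$ fails to be $\pm 1$-valued (it vanishes at $0$) and its DC component is $0$ rather than $\sqrt{q}$. Both defects will be corrected simultaneously by flipping a carefully chosen small set of non-residue positions from $-1$ to $+1$.

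Concretely, pick a set $T \subset \{k \in \Z_q^* : \chi_q(k) = -1\}$ of size $m \approx \sqrt{q}/2$ and define
\[
u_q(k) := \begin{cases} +1, & k \in T \cup \{0\}, \\ \chi_q(k), & \text{otherwise.} \end{cases}
\]
Setting $v := u_q - \chi_q$, one has $v(0) = 1$, $v(k) = 2$ for $k \in T$, and $v(k) = 0$ elsewhere, so that $\hat u_q = \hat\chi_q + \hat v$ with $\hat v(0) = 1 + 2|T| \approx \sqrt{q}$, which repairs the DC component. For $j \ne 0$, $\hat v(j) = 1 + 2\sum_{k \in T} e_q(jk)$, and it remains to ensure that this quantity is $O(q^{1/4}\sqrt{\log q})$ uniformly in $j$. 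I would produce such a $T$ by the probabilistic method: include each non-residue in $T$ independently with probability $p = 1/\sqrt{q}$. For fixed $j \ne 0$, the sum $\sum_{\chi_q(k) = -1} \xi_k\, e_q(jk)$ with $\xi_k \sim \mathrm{Bern}(p)$ has mean $O(p\sqrt{q}) = O(1)$ (using the standard expression for the sum of $e_q(jk)$ over non-residues via $g$) and variance $O(pq) = O(\sqrt{q})$, so Bernstein's inequality yields $|\hat v(j)| = O(q^{1/4}\sqrt{\log q})$ with probability at least $1 - q^{-100}$. A union bound over the $q - 1$ non-zero frequencies controls them all simultaneously, while a Chernoff bound on $|T|$ itself gives $\hat v(0) = \sqrt{q} + O(q^{1/4}\sqrt{\log q})$; at the resulting non-empty event, $u_q$ has the required property.

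The delicate point is the choice of the scale $m \asymp \sqrt{q}$: it must be large enough for $\hat v(0)$ to reach the value $\sqrt{q}$, yet small enough that the random partial exponential sum $\sum_{k \in T} e_q(jk)$—a sum of $m$ unit-modulus complex numbers—does not exceed the tolerance $\sqrt{q}\,\delta_q = Cq^{1/4}\sqrt{\log q}$ at any non-zero frequency. Both constraints can be met simultaneously only at this threshold, and the $\sqrt{\log q}$ factor in $\delta_q$ is precisely the cost of the union bound over the $q$ frequencies. If the authors invoke a more delicate number-theoretic tool (such as Vinogradov's bound on exponential sums restricted to a special set) in place of the random choice of $T$, it would likely replace the $\sqrt{\log q}$ factor with a purely deterministic bound, but the overall architecture of the argument—Gauss sum backbone plus small corrective support—remains the same.
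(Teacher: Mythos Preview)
Your proposal is correct and matches the paper's own argument almost exactly: the paper also takes the Legendre symbol as a backbone, randomly flips a subset of entries with inclusion probability $q^{-1/2}$, uses the Gauss--sum identity to control expectations, and finishes with Bernstein's inequality plus a union bound over the $q$ frequencies. The only cosmetic differences are that the paper flips quadratic \emph{residues} down to $-1$ (and sets $u_q(0)=-1$) rather than flipping non-residues up, so $\hat u_q(0)\approx -\sqrt{q}$ instead of $+\sqrt{q}$; and your closing speculation that the authors might invoke a deterministic Vinogradov-type bound in place of the probabilistic method is unfounded---they do precisely what you propose.
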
	
\begin{proof}
	 The construction closely follows the one in \cite[Proposition 3.2]{JSW}, which in turn originates in  \cite[Theorem 9.2]{MR}.
	 
	Let $v: \Z_q \to \{-1,1\}$ be the Legendre symbol (quadratic character mod $q$). 
	More precisely, let 
	\[
	  Q=\{k \in \Z_q: \ k=j^2 (\text{mod }q) \text{ for some } j \in \Z_q\} \setminus \{0\}
	\]
	be the set of quadratic residues, and set
	\[
	 v(k)=
	 \begin{cases}
	 	1, &\text{if } k \in Q; \\
	 	-1, &\text{if } k \in \Z_q \setminus (Q \cup \{0\}); \\
	 	0, &\text{if } k =0.
	 \end{cases}
	\]
	Then by a standard result on the Gauss sum, see e.g., \cite[Proposition 6.3.2.; p.71]{IR}, we have
	\[
	  |\hat{v}(j)|=
	  \begin{cases}
	  	\sqrt{q}, &\text{if }  j \in \Z_q \setminus \{0\}  \\
	  	0, &\text{if } j=0.
	  \end{cases}
	\]
	The difference between $v$ and the desired function $u_q$ is that $v(0)=0$ and $\hat{v}(0)=0$.
	We will perturb $v$ replacing some of its coordinates  by $-1$ to change the value of $\hat{v}(0)$ as required while keeping the other Fourier coefficients close to their original values. 
	To this end, consider a sequence of i.i.d. random variables $\{X_k\}_{k \in Q}$ such that
	\[
	 \P(X_k=-1)=q^{-1/2} \quad \text{and} \quad \P(X_k=1)=1-q^{-1/2}.
	\]
	Set
	\[
	u_q(k)=
	\begin{cases}
		X_k, &\text{if } k \in Q; \\
		-1, &\text{if } k \in \Z_q \setminus Q; \\
	\end{cases}
	\]
	Then $u_q: \Z_q \to \{-1,1\}$, so we only have to check the values of the Fourier coefficients.
	Let us start with the expectations. 
	We have
	\begin{align*}
	 \E \hat{u_q}(0)
	 =   \E \hat{u_q}(0) - \hat{v}(0) 
	 &=\sum_{k \in Q} (\E X_k -1 )-1= -2 q^{-1/2} |Q|-1 \\
	 &= {q^{ - 1/2}} - {q^{1/2}} - 1,
	 \intertext{and}
     \E \hat{u_q}(j) - \hat{v}(j) 
     &=\sum_{k \in Q} (\E X_k -1 )e_q(jk)-1
     = \sum_{k \in Q} (-2q^{-1/2})e_q(jk)-1
	\end{align*}
    for all  $j \in \Z_q \setminus \{0\}$.
    Evaluation of the last sum is standard, see \cite[Ch. 2]{Davenport}, or \cite[Ch. 6]{IR}. 
    Namely,
    \begin{align*}
    \left| 1 +	2\sum_{k \in Q} e_q(jk) \right|^2
    &= \left| \sum_{k \in \Z_q} e_q(jk^2) \right|^2
    =  \sum_{k,l \in \Z_q} e_q(jk^2) \overline{e_q(jl^2) } \\
    &= \sum_{k,l \in \Z_q} e_q \big(j(k+l)(k-l) \big)=q,
   \end{align*}
   where the last equality follows if we fix $k+l$ and sum over $k-l$ first.
    Thus, 
    \[
      \left| \E \hat{u_q}(j) - \hat{v}(j) \right| \le 2+ q^{-1/2}
      \quad \text{for any } j \in \Z_q \setminus \{0\},
    \]
    and so $\big| |\E \hat{u_q}(j)|-\sqrt{q} \big| \le 3$ for all $j \in \Z_q$.

    The quantity $ \hat{u_q}(j)  -  \E \hat{u_q}(j) $ is a linear combination of  i.i.d. centered random variables $X_k-\E X_k, \ k \in Q$ with coefficients $e_q(jk)$ whose absolute value is bounded by $1$. 
    Therefore, Bernstein's inequality yields
    \[
     \P \left( |  \hat{u_q}(j)  -  \E \hat{u_q}(j) | >t \right)
     \le 2 \exp \left( - c \min(t^2 q^{-1/2} , t) \right)
    \]
    for all $t>0$ and $j \in \Z_q$.
    Setting $t=Cq^{1/4} \sqrt{\log q}$ and taking the union bound over $j \in \Z_q$, we obtain
   \[
     \P \left( |  \hat{u_q}(j)  -  \E \hat{u_q}(j) |  \le C q^{1/4}  \sqrt{\log q} \text{ for all } j \in \Z_q \right)
     \ge 1-  q^{-1}>0
   \] 
   if the constant $C>0$ is chosen sufficiently large. 
   The lemma follows.
\end{proof}

\begin{corollary} \label{cor: circulant}
	Let $q$ be an odd prime number. There exists a $q \times q$ matrix $U_q$ with $\pm 1$ entries such that
	\[
	  \sqrt{q} (1- \d_q) \le s_{\min}(U_q) \le  s_{\max}(U_q) \le \sqrt{q} (1+ \d_q)
	\]
	with $\d_q=C q^{-1/4} \sqrt{\log q}$.
\end{corollary}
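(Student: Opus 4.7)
The plan is to build $U_q$ as the \emph{circulant matrix} generated by the vector $u_q$ supplied by Lemma \ref{lem: prime}, and then read off the singular values from the Fourier coefficients. Concretely, set
\[
  (U_q)_{i,k} = u_q(k-i \bmod q), \qquad i,k \in \Z_q.
\]
Since $u_q$ takes values in $\{-1,1\}$, $U_q$ is a $\pm 1$ matrix, so the only thing to check is the bound on its singular values.

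The key input is that every circulant matrix is simultaneously diagonalized by the discrete Fourier transform on $\Z_q$. Let $F_q$ be the unitary DFT matrix with entries $(F_q)_{j,k} = q^{-1/2} e_q(jk)$. A standard computation shows that $U_q = F_q^\ast D F_q$, where $D$ is the diagonal matrix with entries $\hat{u}_q(0), \ldots, \hat{u}_q(q-1)$ along the diagonal. Equivalently, the columns of $F_q^\ast$ are eigenvectors of $U_q$ with eigenvalues $\hat{u}_q(j)$. Consequently,
\[
  U_q U_q^\top = F_q^\ast |D|^2 F_q,
\]
so the singular values of $U_q$ are precisely the moduli $|\hat{u}_q(j)|$ for $j \in \Z_q$.

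Combining this with the bound $\bigl| |\hat{u}_q(j)| - \sqrt{q} \bigr| \le \sqrt{q}\, \d_q$ from Lemma \ref{lem: prime}, we obtain
\[
  \sqrt{q}(1-\d_q) \le |\hat{u}_q(j)| \le \sqrt{q}(1+\d_q)
\]
uniformly in $j$, which immediately yields the claimed two-sided bound on $s_{\min}(U_q)$ and $s_{\max}(U_q)$.

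There is essentially no obstacle here: the whole point of introducing the Fourier coefficients in Lemma \ref{lem: prime} was to make the singular value bound fall out of the circulant structure for free. The only minor bookkeeping step is to fix a convention for the sign in the exponential so that the eigenvalues of $U_q$ match the Fourier coefficients $\hat{u}_q$ as defined earlier; a cosmetic change of indexing (replacing $u_q$ by $u_q(-\cdot)$ if necessary) handles this without affecting the $\pm 1$ property.
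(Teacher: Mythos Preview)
Your proposal is correct and follows essentially the same route as the paper: define $U_q$ as the circulant matrix generated by $u_q$, use that circulants are diagonalized by the discrete Fourier transform so that the singular values are exactly $|\hat{u}_q(j)|$, and then invoke Lemma~\ref{lem: prime}. The only difference is that you spell out the diagonalization $U_q = F_q^\ast D F_q$ explicitly (with the caveat about sign conventions you already noted), whereas the paper simply cites the standard fact and moves on.
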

\begin{proof}
	Represent $\Z_q$ as $\{1 \etc q\}$, and let $u_q: \{1 \etc q\} \to \{-1,1\}$ be the vector defined in Lemma \ref{lem: prime}.
	Let $U_q$ be the circulant matrix with the first row $u_q$. 
	A circulant matrix is  diagonal in the Fourier basis, see e.g., \cite[Theorem 3.2.1; p. 72]{Dav}.
	 Therefore, the singular values of $U_q$ are the absolute values of its eigenvalues which are the Fourier coefficients of the generating vector $u_q$.
	The result follows from Lemma \ref{lem: prime}.
\end{proof}

\begin{remark} \label{rem: U_q}
		Corollary \ref{cor: circulant} implies that the matrix $U_q$ satisfies
		\begin{align} \label{eq: U_q}
			\norm{ U_q (U_q)^\top -q I_q } \le 3 \delta_q q.
		\end{align}
	Inequality \eqref{eq: U_q} will be used later in the proof of Theorem \ref{th: approx-Hadamard}.
	\end{remark}

With this auxiliary construction in place, we can  prove the main result of this section, namely Theorem \ref{th: approx-Hadamard}.

\begin{proof}[Proof of Theorem \ref{th: approx-Hadamard}]
	The proof of this theorem combines a deterministic construction of number-theoretic nature with a probabilistic argument.
	Without loss of generality, we can assume that $n$ is larger than some number $n_0$ chosen in advance. 
	Indeed, after the statement of the theorem is proved for $n \ge n_0$, we can adjust the constants $c$ and $C$ appropriately to make it hold for all $n \in \N$.
	
	We start with the case when $n$ is even.
	Let $\e>0$  be a number to be chosen later.
	A combination of the Prime Number Theorem and Vinogradov's sum of three primes theorem \cite{Nat}, yields that there exists $N=N(\e)$ such that any even $n>N$ has a decomposition 
	\begin{equation}  \label{eq: four primes}
	  n=q_1+q_2+q_3+q_4 \text{ with } (1-\e) \frac{n}{4}\le q_j \le (1+\e) \frac{n}{4},
	\end{equation}
	where $q_1 \etc q_4$ are prime numbers.
	Indeed, by the Prime Number Theorem, there exists a prime number $q_1$ such that  $(1-\e/2) \frac{n}{4} \le q_1 \le (1+\e/2) \frac{n}{4}$.
	Then $m=n-q_1$ is odd, and thus by a stronger version of Vinogradov's theorem, it can be decomposed as
	\begin{equation} \label{eq: three primes}
	  m=q_2+q_3+q_4, \text{ where } (1-\e/2) \frac{m}{3} \le q_j \le (1+\e/2) \frac{m}{3},
	\end{equation}
    and $q_2,q_3, q_4$ are primes.
	This immediately implies \eqref{eq: four primes}.
	Actually, decompositions with bounds tighter than \eqref{eq: three primes} are available. 
	More precisely, one can find a representations such as \eqref{eq: three primes} with $|q_j-m/3|<m^\theta$ for some $\theta \in (0,1)$, see e.g., \cite{BH,Has,MMS}.
	However, the weaker version presented above will be sufficient for our purposes.
	
	Without loss of generality, assume that $q_1 \ge \cdots \ge q_4 =:q$.	
	We will consider the case $q_3>q_4$ first. This is the most non-trivial case, and the other ones will be treated in the same way after obvious modifications.
	For $j=\{1 \etc 4\}$, 
	let $U_j$ be the matrix $U_{q_j}$ constructed in Corollary \ref{cor: circulant}, and denote by $U_j^{top}$ the submatrix formed by the $q$ top rows of $U_j$. 
	For $j \in \{1,2,3\}$, denote by $U_j^{bottom}$ the submatrix of $U_j$ formed by its $q_j-q$ bottom rows.
	We will construct the matrix $W=V^\top$ in the following block form:
	\[
	  W=
	  \begin{pmatrix}
	  	W_{1,1} & W_{1,2}  & W_{1,3}  & W_{1,4} \\
	  	\vdots & \ddots  &   & \vdots \\
	  	\vdots &  & \ddots  &\vdots \\
	  	W_{4,1} & W_{4,2}  & W_{4,3}  & W_{4,4} \\
	  	W_{5,1} & W_{5,2}  & W_{5,3}  & W_{5,4} \\
	  	W_{6,1} & W_{6,2}  & W_{6,3}  & W_{6,4} \\
	  	W_{7,1} & W_{7,2}  & W_{7,3}  & W_{7,4} \\
	  \end{pmatrix}
       = \begin{pmatrix}
       	W^{top} \\W^{bottom}
       \end{pmatrix},
	\]
	where the matrix $W^{top}$ consists of the upper $4$ block rows of $W$, and $W^{bottom}$ consists of the lower three.
	Here $W_{j,k}$ is a $q \times q_k$ matrix if $1 \le j,k \le 4$ and a $(q_{j-4}-q) \times q_k$ matrix if $j=5,6,7, \ 1 \le k \le 4$.

	Let us define the matrices $W_{j,k}$.
	The matrix $W^{top}$ will be deterministic, and the matrix $W^{bottom}$ will consist of deterministic and random blocks.
	For $1 \le j,k \le 4$, set $W_{j,k}= \e_{j,k} U_k^{top}$, where $\e_{j,k}, \ j,k \in \{1 \etc 4\}$ form a $4 \times 4$ Walsh matrix:
	\[
	  \begin{pmatrix}
	  	\e_{1,1} & \e_{1,2} & \e_{1,3} & \e_{1,4} \\
	  	\e_{2,1} & \e_{2,2} & \e_{2,3} & \e_{2,4} \\
	  	\e_{3,1} & \e_{3,2} & \e_{3,3} & \e_{3,4} \\
	  	\e_{4,1} & \e_{4,2} & \e_{4,3} & \e_{4,4}
	  \end{pmatrix}
  =
      \begin{pmatrix}
      	1 & 1 & 1 & 1 \\
      	1 & -1 & 1 & -1 \\
      	1 & 1 & -1 & -1 \\
      	1 & -1 & -1 & 1 \\
      \end{pmatrix}.
	\]
	Now, let us define the matrices $W_{j,k}$ for $j=5,6,7$. 
	Set $W_{j, j-4}=U_{j-4}^{bottom}$.
	For $j=5,6,7$ and $k \neq j-4$, let $W_{j,k}$ be a random matrix with i.i.d. Rademacher entries.
	
	Consider the $(4q) \times (4q)$ matrix $W^{top} (W^{top})^\top$ first. 
	The diagonal blocks of this matrix are close to $nI_q$.
	More precisely,  for any $j \in \{1 \etc 4\}$, 
	\begin{align*}
	 \norm{ \sum_{k=1}^4 W_{j,k} W_{j,k}^\top - n I_q }
	 &= \norm{ \sum_{k=1}^4 \left( U^{top}_k (U^{top}_k)^\top -q_k I_q \right)} 
	 \le \sum_{k=1}^4 \norm{   U_k (U_k)^\top -q_k I_{q_k} } \\
	 &\le 12 \delta_q n,
	\end{align*}
	where the first inequality follows since $U^{top}_k (U^{top}_k)^\top $ is a submatrix of $ U_k (U_k)^\top $ and the second one from \eqref{eq: U_q}.
	
	Let us consider the off-diagonal blocks now.
	If $i \neq j, \ i,j \in \{1 \etc 4\}$ then similarly
	\begin{align*}
	  \norm{ \sum_{k=1}^4 W_{j,k} W_{i,k}^\top   }
	  &= \norm{ \sum_{k=1}^4 \e_{i,k} \e_{j,k}  U^{top}_k (U^{top}_k)^\top  } \\
	  &\le  \norm{ \sum_{k=1}^4 \e_{i,k} \e_{j,k}  \left( U^{top}_k (U^{top}_k)^\top -q_k I_q \right)  } + \left| \sum_{k=1}^4 \e_{j,k}  \e_{i,k}  q_k \right|\\
	  &\le 12 \delta_q n + \e n,
    \end{align*}
    where the first estimate follows from the triangle inequality, and the second one from $q_k \in [(1-\e) \frac{n}{4}, (1+\e) \frac{n}{4} ]$.
    Combining the two inequalities, we obtain
    \begin{equation}  \label{eq: W-top}
       \norm{  W^{top} (W^{top})^\top - n I_{4q} } \le 12 \d_q n+ 12 ( 12 \delta_q n + \e n)
       \le 13 \e n
    \end{equation}
     for all sufficiently large $n$. 
    
    Let us introduce auxiliary $(n-4q) \times n$ matrices $S$ and $R$ defined by
    \[
      S=
      \begin{pmatrix}
      	U_1^{bottom}& 0 & 0 &0 \\
      	0 & U_2^{bottom} &0 &0 \\
      	0 & 0 & U_3^{bottom} &0
      \end{pmatrix}
    \qquad R= W^{bottom} - S.
   \] 
   In other words, $R$ is the random part of the matrix $W^{bottom}$, i.e.,
   \begin{align*}
   	R=
   	\begin{pmatrix}
   		0 & W_{5,2}  & W_{5,3}  & W_{5,4} \\
   		W_{6,1} & 0  & W_{6,3}  & W_{6,4} \\
   		W_{7,1} & W_{7,2}  & 0  & W_{7,4} \\
   	\end{pmatrix}
\end{align*}
 is an $(n - 4q) \times n$ matrix with zeros along the block diagonal corresponding to the positions of $U_1^{bottom}$, $U_2^{bottom}$, and $U_3^{bottom}$  and i.i.d. Rademacher entries elsewhere.
   
   Recall that $n-4q \le 4 \e n$.
   In view of Corollary \ref{cor: circulant} and inequality \eqref{eq: U_q}, 
   \begin{align}  \label{eq: S and R}
   \norm{S} 
   &\le \sqrt{\frac{n}{4}} (1+\d_q),
   \qquad
   \norm{S (W^{top})^\top} 
   \le 12 \d_q n, \\
     \norm{S}_{\HS}^2 
     &\le 3 \e n \cdot (1+\e)\frac{n}{4}
     \le \e n. \notag
   \end{align}
   Also,
   \begin{align*}
     \norm{S S^\top - \frac{n}{4} I_{n-4q}}
     &\le \max_{j=1,2,3} \norm{U_j^{bottom}(U_j^{bottom})^\top - q_j I_{q_j-q}} + \e \frac{n}{4} \\
     &\le  \e \frac{n}{2}.
   \end{align*}

   Let $\tilde{R}$ be an $(n-4q) \times n$ matrix with i.i.d. Rademacher entries.
   % \xd{And recall that $R= W^{bottom} - S$ is an $(n - 4q) × n$ matrix with zeros along the block diagonal and i.i.d. Rademacher entries elsewhere.}
   Then a simple symmetrization argument yields
   \[
     \P \left( \norm{S R^\top} \ge C \sqrt{\e} n  \right)
     \le 2 \P \left( \norm{S \tilde{R}^\top} \ge (C/2) \sqrt{\e} n  \right)
   \]
   which in combination with  \cite[Theorem 3.2]{RV-HW} implies that
   \[
    \P \left( \norm{S R^\top} \le C \sqrt{\e} n  \right)
    \ge 1 - \exp(-c \e n).
   \]
   Another application of symmetrization yields
   \[
     \P(\norm{R} \ge 4 \sqrt{n})
     \le 2 \P(\|\tilde{R}\| \ge 2 \sqrt{n})
     \le \exp(-cn).
   \]
   Fix a matrix $R$ for which 
   \begin{equation}  \label{eq: SR}
     \norm{S R^\top} \le C \sqrt{\e} n 
     \quad \text{and } \norm{R} \le 4 \sqrt{n}
   \end{equation}
   at the same time.
   
   Let $x \in S^{n-1}$. 
   Following the previous convention, we write 
   \[
     x=
     \begin{pmatrix}
     	x^{top} \\ x^{bottom}
     \end{pmatrix},
   \]
   where $x^{top} \in \R^{4q}$ and $x^{bottom} \in \R^{n-4q}$.
   Assume first that
    $\norm{x^{bottom}}_2 \ge \eta$, where the constant $\eta>0$ will be chosen below.
   Then
   \begin{align*}
   	\norm{W^\top x}_2 
   	&\ge \frac{1}{\norm{S}} \cdot \norm{S W^\top x}_2 \\
   	&\ge  \frac{1}{\norm{S}} \cdot \left( \norm{S (W^{bottom})^\top x^{bottom}}_2 - \norm{S (W^{top})^\top x^{top}}_2 \right) \\
   	&\ge (1- 2\d_n) \sqrt{\frac{4}{n}} \cdot \\
   	& \qquad  \left( \norm{S S^\top x^{bottom}}_2 - \norm{S R^\top x^{bottom}}_2 - \norm{S (W^{top})^\top x^{top}}_2 \right) \\
   	&\ge (1- 2\d_n) \sqrt{\frac{4}{n}} \cdot \\
   	& \qquad \qquad \left( (1 - 2 \e) \frac{n}{4} \norm{x^{bottom}}_2 -  \norm{S R^\top } - \norm{S (W^{top})^\top } \right) \\
   	&\ge (1- 4 \d_n) \sqrt{\frac{n}{4}} \cdot  \left( (1 - 2 \e) \norm{x^{bottom}}_2 - C_1 \sqrt{\e} \right) \\
   	&\ge \frac{\eta}{4} \sqrt{n}
   \end{align*}
   if $\eta$ and $\e$ are chosen so that $(1 - 2 \e) \eta - C_1 \sqrt{\e} >\eta/2 $.

   Assume now that $\norm{x_{bottom}}_2 < \eta$.
   Then $\norm{x_{top}}_2 > 1- \eta$, and   \eqref{eq: W-top} yields
   \begin{align*}
   	\norm{W^\top x}_2
   	&\ge \norm{(W^{top})^\top x^{top}}_2 - \norm{(W^{bottom})^\top} \cdot \norm{x^{bottom}}_2 \\
   	&\ge (1-7 \e) \sqrt{n} \cdot \norm{x^{top}}_2  - C_2 \sqrt{n}  \cdot \norm{x^{bottom}}_2 \\
   	&\ge (1-7 \e) \sqrt{n} \cdot (1-\eta)  - C_2 \sqrt{n}  \cdot \eta \\
   	&\ge \frac{1}{2} \sqrt{n}
   \end{align*}
   if $\eta$ is chosen so that $C_2 \eta < \frac{1}{4}$.
  Choosing the parameters $\e$ and $\eta$ sufficiently small, we can 
    reconcile the two restrictions, i.e., select $\e, \eta$ so that the inequalities 
   \[
    (1 - 2 \e) \eta - C_1 \sqrt{\e} >\eta/2 \quad \text{and} \quad  C_2 \eta < \frac{1}{4}
   \]
   hold at the same time.
   With this choice, the previous argument shows that 
   \[
     \norm{W^\top x}_2
     \ge \min \left( \frac{\eta}{4}, \frac{1}{2}\right) \sqrt{n}
   \]
   for all $x \in S^{n-1}$, which means that 
   \[
     s_{\min} (W^\top) \ge c \sqrt{n}.
   \]
   Obtaining a bound for $s_{\max}(W)$ is easier. Inequalities \eqref{eq: W-top} and \eqref{eq: S and R} imply
   \[
     \norm{W^{top}} \le 2 \sqrt{n} \quad \text{and } \norm{S} \le \sqrt{n}.
   \]
    This in combination with \eqref{eq: SR} yields
   \[
     s_{\max} (W^\top) \le C \sqrt{n},
   \]
  which proves the theorem in the case $q_3>q$.
   
   If $q_j=q$ for some $j \in \{1,2,3\}$, then we repeat the same argument with the block rows of $W$  containing $q_j-q=0$ rows removed. 
   For instance, if $q_1>q$ and $q_2=q_3=q$, then we consider the $n \times n$ matrix $W$ with $W^{top}$ being the same as in the previous case and $W^{bottom}=(W_{5,1} \cdots W_{5,4})$ which is a $(q_1-q) \times n$ matrix.
   
   This completes the proof of the theorem in the case when $n$ is even.
   
   \vskip 0.1in
   
   Assume that $n$ is odd.
   By Corollary \ref{cor: JSW}, if $n$ is sufficiently large, then we can find an even number 
   \[
     m \in \left[ \left(1- \frac{\e}{2} \right) \frac{n}{4},  \left(1+ \frac{\e}{2} \right) \frac{n}{4} \right]
   \]
   for which there exists an Hadamard matrix $V$ of size $m \times m$.
   Note that $n-m$ is odd and 
    \[
      n-m \in \left[ \left(1- \frac{\e}{2} \right) \frac{3n}{4},  \left(1+ \frac{\e}{2} \right) \frac{3n}{4} \right],
   \]
    so using Vinogradov's theorem again, we obtain a decomposition
   \[
   n-m=q_1+q_2+q_3,
   \]
   where $q_1, q_2, q_3$ are prime numbers and $\frac{1-\e}{4} n \le q_j \le  \frac{1+\e}{4} n$.
   At this point we can apply the same argument we used in the case of an even $n$ with one of the matrices $U_1 \etc U_4$ replaced by $V$.
   This completes the proof of the theorem.
\end{proof}

\section{Submatrices with a small condition number}  \label{seq: submatrix}

 In this section we prove Theorem \ref{thm: exist}. 
 As was explained in the Introduction, the proof relies on finding  columns of $A$ which are close to columns of a scaled copy of the matrix $V$ constructed in the previous section.
 Conditioned on the event that such selection is possible, we prove that with high probability, the constructed submatrix has a bounded condition number.
 We start with the latter task, namely with analyzing a random matrix close to $V$.
 \begin{lemma}  \label{lem: close to V}
 	Let $V$ be an $n \times n$ matrix with $\pm 1$ entries such that 
 	\[
 	  c_{\ref{lem: close to V}} \sqrt{n}  \le s_{\min}(V) \le  s_{\max}(V) \le C_{\ref{lem: close to V}} \sqrt{n}
 	\]	
 	for some $0< c_{\ref{lem: close to V}} \le  C_{\ref{lem: close to V}}$.
 	
 	There exists $\d \in (0,1)$ for which
 	any  $n \times n$ matrix $Y$ with i.i.d. entries $Y_{i,j}$ such that 
 	\[
 	 \E Y_{i,j}=0 \quad \text{and } |Y_{i,j}| \le \d \text{ a.s.}
 	\] 
 	satisfies
 	\[
 	  \P \left( \kappa(V+Y) \le 4 \frac{C_{\ref{lem: close to V}}}{c_{\ref{lem: close to V}}}\right) 
 	  \ge 1 - \exp (-cn).
 	\]
 \end{lemma}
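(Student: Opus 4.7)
The plan is to write $V+Y$ as a small perturbation of $V$ in the operator norm, which is possible because the hypothesis on $V$ gives us control of both $\|V\|$ and $\|V^{-1}\|$. The whole argument reduces to establishing the single estimate
\[
  \|Y\| \le C' \delta \sqrt{n} \quad \text{with probability } \ge 1 - \exp(-cn),
\]
and then choosing $\delta$ small enough so that this operator norm bound is dominated by $s_{\min}(V)$.

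First I would show the norm bound on $Y$. Since $Y$ has independent, centered entries bounded almost surely by $\delta$, a standard symmetrization reduces to the Rademacher case $\tilde{Y}$ with $\delta \varepsilon_{i,j}$ entries (up to an absolute factor), and for such a matrix $\|\tilde Y\| \le C' \delta \sqrt{n}$ with probability $1 - \exp(-cn)$ by the Bai--Yin/Seginer-type concentration (or directly from the Hanson--Wright / matrix Bernstein inequality used elsewhere in the paper, e.g., \cite{RV-HW}). This is precisely the type of input already used in the proof of Theorem~\ref{th: approx-Hadamard} to bound $\|R\|$, so no new tool is needed.

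Next I would use the multiplicative perturbation identity. Because $s_{\min}(V) \ge c_{\ref{lem: close to V}} \sqrt{n} > 0$, the matrix $V$ is invertible with $\|V^{-1}\| \le (c_{\ref{lem: close to V}} \sqrt{n})^{-1}$. Writing $V + Y = V(I + V^{-1} Y)$, we get, on the event that $\|Y\| \le C'\delta\sqrt{n}$,
\begin{align*}
  \|V^{-1} Y\| &\le \|V^{-1}\| \, \|Y\| \le \frac{C' \delta}{c_{\ref{lem: close to V}}}.
\end{align*}
Choosing $\delta \in (0,1)$ so small that $C' \delta / c_{\ref{lem: close to V}} \le 1/2$, the operator $I + V^{-1} Y$ is invertible with $s_{\min}(I + V^{-1} Y) \ge 1/2$, so
\[
  s_{\min}(V+Y) \ge s_{\min}(V) \cdot s_{\min}(I + V^{-1} Y) \ge \tfrac{1}{2} c_{\ref{lem: close to V}} \sqrt{n}.
\]
For the largest singular value, the triangle inequality gives
\[
  s_{\max}(V+Y) \le s_{\max}(V) + \|Y\| \le C_{\ref{lem: close to V}} \sqrt{n} + C'\delta\sqrt{n} \le 2 C_{\ref{lem: close to V}} \sqrt{n},
\]
after shrinking $\delta$ further if necessary. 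Combining the two bounds,
\[
  \kappa(V+Y) = \frac{s_{\max}(V+Y)}{s_{\min}(V+Y)} \le \frac{2 C_{\ref{lem: close to V}} \sqrt{n}}{(1/2) c_{\ref{lem: close to V}} \sqrt{n}} = 4\,\frac{C_{\ref{lem: close to V}}}{c_{\ref{lem: close to V}}},
\]
all on an event of probability at least $1 - \exp(-cn)$.

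The only genuinely probabilistic step is the bound $\|Y\| \lesssim \delta \sqrt{n}$; once that is in hand the rest is deterministic perturbation theory with a well-chosen $\delta$. There is no real obstacle since both ingredients (bounded-entry concentration and multiplicative perturbation) are standard, and the constant $\delta$ can be chosen purely as a function of $c_{\ref{lem: close to V}}, C_{\ref{lem: close to V}}$ and the universal constant $C'$ from the norm bound.
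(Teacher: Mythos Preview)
Your proposal is correct and follows essentially the same strategy as the paper's proof: the only probabilistic input is the operator-norm bound $\|Y\|\le C\delta\sqrt{n}$ with probability at least $1-\exp(-cn)$, after which the rest is deterministic perturbation of $V$. The paper uses the slightly simpler additive inequality $s_{\min}(V+Y)\ge s_{\min}(V)-\|Y\|$ in place of your multiplicative factorization $V+Y=V(I+V^{-1}Y)$, but this is an immaterial difference.
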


 \begin{proof}
 	The proof of Lemma \ref{lem: close to V} uses the basic net argument, see e.g., \cite{Ver book}.
 	Since $Y$ has i.i.d. centered subgaussian entries with $\norm{Y_{i,j}}_{\psi_2} \le \norm{Y_{i,j}}_{\infty} \le \d$,
 	\[
 	 \P (\norm{Y} \ge C \sqrt{\d n}) \le \exp(-cn).
 	\]
 	Therefore,
 	\[
 	 \P ( s_{\max}(V+Y) \ge 2  C_{\ref{lem: close to V}} \sqrt{n})
 	 \le
 	 \P (\norm{V} +\norm{Y} \ge 2  C_{\ref{lem: close to V}} \sqrt{n}) \le \exp(-cn),
 	\]
 	as we can always assume that $ C_{\ref{lem: close to V}} \ge 1$ and choose $\d$ sufficiently small.
 	Similarly,
 	\[
 	\P ( s_{\min}(V+Y) \le \frac{1}{2}  c_{\ref{lem: close to V}} \sqrt{n})
 	\le
 	\P (s_{\min}(V) -\norm{Y} \le \frac{1}{2}  c_{\ref{lem: close to V}} \sqrt{n}) 
 	\le \exp(-cn),
 	\]
 	where as before, the last inequality holds for any sufficiently small $\d$.
 	The result follows by combining the two bounds above.
 \end{proof}

We now proceed to proving the main result, Theorem \ref{thm: exist}.
 
 \begin{proof}[Proof ot Theorem \ref{thm: exist}]
 	Since the distribution of entries of $A$ is non-degenerate, there exists $a>0$ such that for any $\nu>0$
 	\[
 	  \P \left(|a_{i,j}-a| < \nu \right) >0.
 	\]
 	By the symmetry of distribution, we also have the same property for $-a$ in place of $a$.
 	
 	 	Let $\d>0$ be as in Lemma \ref{lem: close to V}, and denote
 	 $
 	   \nu= \frac{a}{4} \d.
 	 $
 	Let $Z$ be a random variable having the same distribution as $a_{i,j}$ conditioned on the event that $|a_{i,j}-a | \le \nu$.
 	More precisely, for a Borel set $E \subset \R$, set 
 	\[
 	  \P(Z \in E)= \frac{1}{ \P(|a_{i,j}-a | \le \nu)} \P(a_{i,j} \in E \ \& \ |a_{i,j}-a | \le \nu).
 	\]
 	Set
 	\[
 	 R=\frac{Z- \E Z}{\E Z}.
 	\]
 	Then $R$ is a centered random variable such that
 	\[
 	  |R| \le \frac{2 \nu}{(1-\d/4)a} \le \d \ \text{ a.s.}
 	\]

 	Let $C>0$ be a constant to be chosen later, and assume that $N \ge \exp(2 C n)$.
 	Partition $[N]$ into a union of sets $I_1 \etc I_n$ such that 
 	\[
 	 |I_j| \ge \left\lfloor \frac{N}{n} \right\rfloor  \ge \exp( C n).
 	\]
 	Let $V$ be the $n \times n$ matrix with $\pm 1$ entries constructed in Theorem \ref{th: approx-Hadamard}.
 	Denote its columns by $V_1 \etc V_n$ and the columns of $A$ by $A_1 \etc A_N$.
 	Let $M$ be a number to be chosen later.
 	Let $\EE$ be the event that for any $j \in [n]$, there exist at least $M$ numbers $k \in I_j$ with
 	\[
 	 \norm{A_k- a V_j}_{\infty} \le  \nu.
 	\]
 	If $\EE$ occurs, denote by $k(j, 1) \etc k(j, M)$ the first $M$ numbers $k \in I_j$ having this property. 
 	Then  conditioned on $\EE$, for any $m \in [M]$, the matrix $A_{\EE, m}$ with columns $A_{k(1, m)} \etc A_{k(n, m)}$  has the same distribution as  $\E Z \cdot (V +Y)$, where $Y$ is an $n \times n$ random matrix whose entries have the form $Y_{i,j}=V_{i,j} R_{i,j}$, where $R_{i,j}$ are independent copies of $R$.
 	In view of Lemma \ref{lem: close to V}, this implies that for any $m \in [M]$,
 	\[
        \P \left( \kappa(A_{\EE, m}) \le 4 \frac{C_{\ref{lem: close to V}}}{c_{\ref{lem: close to V}}}  \mid \EE \right) 
         \ge 1 - \exp (-cn).
    \] 	
    Since conditionally on $\EE$, the matrices $A_{\EE,1} \etc A_{\EE,M}$ are independent, Bernstein's inequality allows to conclude that
    \[
        \P \left( \kappa(A_{\EE, m}) \le 4 \frac{C_{\ref{lem: close to V}}}{c_{\ref{lem: close to V}}}  \text{ for at least } M/2 \text{ numbers } m \in [M] \mid \EE \right) 
        \ge 1 - \exp (-c'M).   
    \]
    To complete the proof, we have to show that the probability of $\EE^c$ is small. 
    To this end, denote $\eta= \P(|a_{i,j}-a | \le \nu)$. 
    Then by the symmetry of distribution of  the entries of $A$, $\P(|a_{i',j'}-a V_{i,j}| \le \nu) =\eta$ for any $i',j'$.
    Let $j \in [n]$. 
    For any $k \in I_j$, 
    \[
       \P( \norm{A_k- a V_j}_{\infty} \le  \nu )
       = \eta^n.
    \]
    Set 
    \[
       M= \frac{N}{2} \cdot \eta^n 
       = \frac{1}{2} \exp \left(  Cn - \log  \left( \frac{1}{\eta} \right) \cdot n \right) 
       \ge \exp \left( \frac{C n}{2}\right)
    \]
    where the last inequality holds if $C=C(\eta)$ is chosen sufficiently large. 
    Note that the events $\norm{A_k- a V_j}_{\infty} \le  \nu$ are independent for all $k \in I_j$.
    At this point, Bernstein's inequality yields
    \begin{align*}
       \P( \norm{A_k- a V_j}_{\infty} \le   \nu \text{ for less than } M \text{ numbers } k \in I_j    )
      & \le \exp (-c''M) \\
       &\le \exp \left[ - \exp \left( \frac{C n}{4}\right)\right].
    \end{align*}
    Therefore,
    \begin{align*}
    	\P(\EE^c)
    	&\le \sum_{j=1}^{n} \P( \norm{A_k- a V_j}_{\infty} \le   \nu \text{ for less than } M \text{ numbers } k \in I_j    ) \\
    	&\le n \cdot  \exp \left[ - \exp \left( \frac{C n}{4}\right)\right] 
    	\le  \exp \left[ - \exp \left( \frac{C n}{8}\right)\right].
    \end{align*}
  Let $L=M/2$, and $\alpha=4 \frac{C_{\ref{lem: close to V}}}{c_{\ref{lem: close to V}}} $. 
  Combining the previous inequalities, we obtain that
 	\begin{align*}
      	 & \P \big( \text{\rm exist disjoint subsets }
      I_1 \etc I_L \text{\rm \  of } [N] \text{\rm \ such that } \big. \\
      &\qquad \big.  |I_j|=n \text{\rm \  and } \kappa(A_{I_j}) < \alpha \text{\rm \ for all } j \in [L] \big) \\
      &\ge   \P \left( \kappa(A_{\EE, m}) < \alpha \text{ for at least } M/2 \text{ numbers } m \in [M] \mid \EE \right)  \cdot (1-\P(\EE^c)) \\
      &\ge 1 - \exp(-c'M) -  \exp \left[ - \exp \left( \frac{C n}{8}\right)\right] \\
      & \ge 1- \exp \left(  - \exp ( \beta n ) \right)
    \end{align*}
for an appropriate $\beta>0$.
This completes the proof of the theorem.
 \end{proof}

 \section{No submatrices with a small condition number} \label{sec: no small}

  In this section, we prove  Theorem \ref{thm: not exist}.
  \begin{proof}
  	  Without loss of generality, we can assume that $\norm{X}_2 = (\E X^2 )^{1/2}=1$.
  	Throughout the proof, we denote by $C,c,c'$, etc. constants depending only on $\norm{X}_{\psi_2}$.
  	
    Consider an $n \times n$ random matrix $B$ whose entries are independent copies of $X$.
    We claim that 
    \begin{equation}  \label{eq: norm B}
    	\P (\norm{B} \le c \sqrt{n}) \le \exp (-c' n^2).
    \end{equation}
    Indeed, denoting the columns of $B$ by $B_1 \etc B_n$, and applying the Hanson-Wright inequality \cite[Theorem 2.1]{RV-HW}, we get
    \begin{align*}
     &\P (\norm{B} \le \frac{1}{2} \sqrt{n}) \le  \P (\norm{B_j}_2 \le  \frac{1}{2} \sqrt{n} \text{ for all } j \in [n]) \\
     &\le \left( \P\left[ \E \norm{B_j}_2^2  - \norm{B_j}_2^2  \ge  \frac{1}{2} n \right] \right)^n 
      \le \exp (-c' n^2).
    \end{align*}
    Furthermore, we assert that 
    \begin{equation}  \label{eq: smallest B}
	  \P (s_n(B) \ge 2 \sqrt{\e n}) \le \exp (-c' \e^2 n^2)
    \end{equation}  
for any $\e>0$. 
 Proving \eqref{eq: smallest B} relies on a standard fact from linear algebra.
\begin{lemma}  \label{lem: s_n upper}
	Let $M$ be an $n \times n$ matrix. 
	Let $k<n$, and denote $H= \Span (Me_{k+1} \etc Me_n)$, where $e_1 \etc e_n$ is the standard basis of $\R^n$.
	Then
	\[
	  s_n(M) \le \min_{j=1 \etc k} \norm{P_{H^\perp}  M e_j}_2,
	\]
	where $P_{H^\perp}$ is the orthogonal projection on $H^\perp$.
\end{lemma}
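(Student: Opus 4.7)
The plan is to exhibit, for each $j \in \{1,\ldots,k\}$, a nonzero vector $x_j \in \R^n$ with $\|Mx_j\|_2 / \|x_j\|_2 \le \|P_{H^\perp} M e_j\|_2$. Since $s_n(M) = \min_{x \neq 0} \|Mx\|_2/\|x\|_2$, this will immediately yield the claim after minimizing over $j$.

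Fix $j \le k$ and decompose $Me_j = P_H(Me_j) + P_{H^\perp}(Me_j)$. Since $H$ is by definition spanned by $Me_{k+1}, \ldots, Me_n$, we can pick coefficients $c_{k+1}, \ldots, c_n \in \R$ (not necessarily unique, but existence is all we need) such that
\[
P_H(Me_j) = \sum_{i=k+1}^n c_i\, M e_i.
\]
Now set
\[
x_j = e_j - \sum_{i=k+1}^n c_i\, e_i.
\]
Because $j \le k$, the vector $e_j$ is orthogonal to each $e_i$ with $i > k$, so $\|x_j\|_2^2 = 1 + \sum_{i>k} c_i^2 \ge 1$. Applying $M$ and using the chosen representation of $P_H(Me_j)$,
\[
M x_j = Me_j - \sum_{i=k+1}^n c_i M e_i = Me_j - P_H(Me_j) = P_{H^\perp}(Me_j).
\]

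Combining these two observations,
\[
s_n(M) \le \frac{\|M x_j\|_2}{\|x_j\|_2} \le \|P_{H^\perp}(Me_j)\|_2,
\]
and taking the minimum over $j \in \{1,\ldots,k\}$ completes the proof.

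There is no real obstacle here; the only mildly subtle point is the lower bound $\|x_j\|_2 \ge 1$, which is where the restriction $j \le k$ is used (it forces $e_j$ to be orthogonal to the span of the $e_i$ with $i > k$, so the norm of $x_j$ can only grow by subtracting those vectors). Everything else is a one-line orthogonal decomposition.
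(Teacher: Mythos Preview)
Your proof is correct and follows essentially the same idea as the paper's: both construct, for each $j\le k$, a vector $x_j=e_j-u$ with $u\in\Span(e_{k+1},\ldots,e_n)$ so that $\|x_j\|_2\ge 1$ and $Mx_j=P_{H^\perp}Me_j$, then invoke the variational characterization of $s_n(M)$. The paper phrases this via $\|M^{-1}\|$ after reducing to the invertible case, whereas you work directly with the Rayleigh quotient and thereby avoid that reduction; this is a minor streamlining, not a different argument.
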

\begin{proof}
	Without loss of generality, we can assume that the matrix $M$ is invertible.
	Let  $j \in [k]$. 
	Choosing an appropriate $u \in \Span (e_{k+1} \etc e_n)$, we obtain
	\[
	  1 \le \norm{e_j-u}_2 
	  \le \norm{M^{-1}} \cdot \norm{Me_j- Mu}_2 
	  = s_n^{-1} (M) \cdot \norm{P_{H^\perp}M e_j}_2,
	\]
	where the equality holds after optimization over $u$.
	The lemma follows.
\end{proof}

 To prove \eqref{eq: smallest B}, we apply Lemma \ref{lem: s_n upper} to $B$ setting $k = \lfloor \e n \rfloor$. 
 It yields
 \[
    \P (s_n(B) \ge 2 \sqrt{\e n}) \le \P (\norm{P_{H^\perp}  B e_j}_2   \ge 2 \sqrt{\e n}  \text{ for all } j \in [k] ).
 \]
 Conditioning on $B_{k+1} \etc B_n$, we can rewrite the right hand side ov the above inequality as
 \begin{align*}
    &\E  \left( \P \big[ \norm{P_{H^\perp}  B e_j}_2   \ge 2 \sqrt{\e n}  \text{ for all } j \in [k]  \mid B_{k+1} \etc B_n \big] \right) \\
    =  &\E  \left(\P \big[  \norm{P_{H^\perp}  B e_1}_2   \ge 2 \sqrt{\e n}   \mid B_{k+1} \etc B_n \big] \right)^k
 \end{align*}
 using independence of the columns of $B$.
  The conditional probability can be estimated by applying the Hanson-Wright inequality again. 
  Applying  \cite[Theorem 2.1]{RV-HW} to the vector $B_1=Be_1$ having i.i.d. centered subgaussian coordinates, we get
  \begin{align*}
   &  \P \big[  \norm{P_{H^\perp}  B e_1}_2   \ge 2 \sqrt{\e n}   \mid B_{k+1} \etc B_n \big]  \\
    \le \ & \P \big[  \norm{P_{H^\perp}  B e_1}_2  - \norm{P_{H^\perp}}_{\HS} \ge  \sqrt{\e n}   \mid B_{k+1} \etc B_n \big] \\
    \le \ & \exp (-c \e n).
  \end{align*}
  Taking the expectation with respect to $B_{k+1} \etc B_n$ and combining it with the previous inequality completes the proof of \eqref{eq: smallest B}.
  
  Using \eqref{eq: smallest B} with $\e=t^{-2}/4$ together with \eqref{eq: norm B}, we derive
  \[
   \P(\kappa(B) < t) \le \exp \left( - c' \frac{n^2}{t^4}\right).
  \]
  The proposition follows by using this inequality for $B=A_I$ and taking the union bound over $I \subset [N]$:
  \begin{align*}
    \P \left( \exists I \subset [N] \ |I|=n \text{ and } \kappa(A_I) < t \right) 
    &\le \binom{N}{n}  \exp \left(  - c' \frac{n^2}{t^4}\right) \\
    &\le \exp \left( n \log \left( \frac{e N}{n}\right)  - c' \frac{n^2}{t^4} \right) \\
    &\le \exp \left(   - (c'-\tilde{c}) \frac{n^2}{t^4} \right),
  \end{align*}
where the last inequality follows from the assumption on $N$.
Setting $\tilde{c}=c'/2$ completes the proof.  
  \end{proof}

\end{document}